\documentclass[reqno]{amsart}

\usepackage{epsf}
\usepackage{graphics}
\usepackage{graphicx}
\usepackage{amssymb}
\usepackage{amsmath}

\date{}

\theoremstyle{plain}
\newtheorem{theorem}{Theorem}
\newtheorem{corollary}{Corollary}
\newtheorem{proposition}{Proposition}

\theoremstyle{definition}

\newtheorem{remark}{Remark}

\theoremstyle{remark}

\newcommand\Z{{\mathbb Z}}

\usepackage{xcolor}

\title{Bouquets of curves in surfaces}

\author{S.~Baader, P.~Feller, L.~Ryffel}

\begin{document}

\begin{abstract} We characterise when a set of simple closed curves in an orientable surface forms a bouquet, in terms of relations between the corresponding Dehn twists.
\end{abstract}

\maketitle

\section{Introduction}

The \emph{mapping class group} of an oriented compact surface is the set of isotopy classes of its orientation-preserving self-diffeomorphisms with group law induced by composition. The mapping class group of an oriented closed surface is generated by Dehn twists along simple closed curves. This is due to the fact that a mapping class is essentially determined by its action on the set of simple closed curves; see Paragraph~10 in~\cite{De} and Chapter~4 in~\cite{Bi}. Dehn twists store a lot of information about curves; most importantly, the isotopy type of their defining curves.
 Two positive Dehn twists with non-isotopic defining curves detect low intersection numbers: they commute, or satisfy the braid relation, if and only if their defining curves have intersection number zero or one, respectively; see Chapter~3 in~\cite{FM}. Here two group elements $g$ and $h$ are said to satisfy the \emph{braid relation} if $ghg=hgh$.
 
A \emph{bouquet} in a surface is a union of $n$~simple closed curves that have precisely one common intersection point in which all curves intersect pairwise transversally (that is with~$n$ different tangent lines). We say a set of pairwise non-isotopic simple closed curves in a surface \emph{forms a bouquet}, if their union is a bouquet after an individual isotopy of the curves involved,
and a set of curves 
indexed by $\Z/n\Z$ \emph{forms
an oriented bouquet} if they form a bouquet such that the cyclic order of the tangent vectors at the
common intersection point agrees with the one induced by the standard cyclic order 
of $\Z/n\Z$.
In this note, we derive a group theoretic characterization of oriented bouquets in terms of Dehn twists.

\begin{theorem} \label{bouquet} Let $\Sigma$ be an oriented compact surface and $n\geq 2$ an integer.
A set of simple closed curves $c_1,c_2,\cdots, c_n$ in $\Sigma$ 
forms
an oriented bouquet if and only if the corresponding positive Dehn twists $T_1,T_2,\ldots,T_n$ are not all equal and satisfy the following relations:
\begin{enumerate}
\item [(i)] the braid relation $T_i T_j T_i=T_j T_i T_j$, for all pairs $i,j \in\Z/n\Z$, and
\item [(ii)] the cycle relation $T_i T_j T_k T_i=T_j T_k T_i T_j$, for all triples $i,j,k\in\Z/n\Z$ of pairwise distinct indices such that $j$ is after $i$ and before $k$ in the standard cyclic order.
\end{enumerate}
\end{theorem}
We briefly comment on the conditions~(i) and (ii).
According to the discussion above, condition~(i) is equivalent to the pairwise intersection numbers being one. For sets of three or more curves, forming a bouquet is a strictly stronger condition. For example, a triple of curves that pairwise intersect once transversally needs to delimit a triangle on the surface in order to form a bouquet; see Section~\ref{sec:tripbouq}.
By Theorem~\ref{bouquet} the cycle relation is the additional condition needed to characterise bouquets. While less prominent than the braid relation, the cycle relation features in several geometric contexts. For example, it appears in the work of L\"onne on the monodromy group of simple plane curve singularities~\cite{L} (see also~\cite{PS} for a recent description of that group as a framed mapping class group). It also plays an important role in the definition of mutation-invariant groups associated with Dynkin type quivers introduced by Grant and Marsh in~\cite{GM}.


The key observation on which Theorem~\ref{bouquet} relies is the following group theoretic fact, the first part of which is a reformulation of a consequence of a result by Birman and Hilden~\cite{BH}, while the second part is an algebraic consequence of Artin's standard braid group presentation.

\begin{proposition} \label{braidgroup}
Let $c_1,c_2,\ldots,c_n$ form a $\pi_1$-injective bouquet in an oriented compact surface~$\Sigma$. Then the subgroup of the mapping class group of $\Sigma$ generated by the corresponding positive Dehn twists $T_1,T_2,\ldots,T_n$ is isomorphic to the braid group $B_{n+1}$. Moreover, the braid and cycle relations (i) and (ii) form a complete set of relations for the generators $T_1,T_2,\ldots,T_n$.
\end{proposition}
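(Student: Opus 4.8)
For the first assertion I would first pass from $\Sigma$ to a regular neighbourhood $N$ of the bouquet $c_1\cup\dots\cup c_n$. Since the bouquet deformation retracts to a wedge of $n$ circles, $N$ is an orientable surface with $\chi(N)=1-n$. The hypothesis that the bouquet is $\pi_1$-injective guarantees that $N\hookrightarrow\Sigma$ is incompressible with essential, pairwise non-isotopic boundary, so that the Dehn twists $T_i$, together with the boundary twists they generate, inject into $\mathrm{MCG}(\Sigma)$; this is exactly what prevents the centre of the candidate braid group (the class of the boundary twists) from collapsing, and reduces the statement to a computation inside $N$. I would then exhibit the hyperelliptic-type involution $\iota$ of $N$ whose quotient is a disc $D$ with $n+1$ branch points $p_0,\dots,p_n$ (note $\chi(N)=2\chi(D)-(n+1)=1-n$), arranged so that each $c_i$ is the preimage of an embedded arc $\alpha_i$ joining a fixed central branch point $p_0$ to $p_i$. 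Under this description the $c_i$ all meet in the single preimage of $p_0$, recovering the bouquet, and each $T_i$ is the lift of the half-twist $s_i$ along $\alpha_i$.

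The Birman--Hilden theorem then identifies the symmetric mapping class group of $N$ with $\mathrm{MCG}(D,\{p_0,\dots,p_n\})=B_{n+1}$, carrying $s_i$ to $T_i$. Since the half-twists along the star of arcs $\alpha_1,\dots,\alpha_n$ generate $B_{n+1}$, the $T_i$ generate the whole group, giving $\langle T_1,\dots,T_n\rangle\cong B_{n+1}$, which is the first assertion.

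For the second assertion I would work purely inside $B_{n+1}$, using the identification $T_i\leftrightarrow s_i$ from the first part, where $s_i$ is the band (star) generator joining strand $1$ to strand $i+1$. In terms of the standard Artin generators $\sigma_1,\dots,\sigma_n$ one has $s_1=\sigma_1$ and, more generally, $\sigma_i=s_{i-1}s_is_{i-1}^{-1}$ for $i\ge 2$, so the two generating sets are related by an explicit change of variables. I would set $G=\langle s_1,\dots,s_n\mid \text{(i)},\text{(ii)}\rangle$ and construct mutually inverse homomorphisms between $G$ and $B_{n+1}$: the map $G\to B_{n+1}$ exists because (i) and (ii) are readily checked to hold among the $s_i$ in $B_{n+1}$, and for the inverse $B_{n+1}\to G$ it suffices to verify that the words $w_1=s_1$, $w_i=s_{i-1}s_is_{i-1}^{-1}$ satisfy Artin's braid relations as consequences of (i) and (ii) in $G$. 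The cyclic ordering of the arcs $\alpha_i$ around $p_0$ is exactly what the phrase ``after a suitable permutation of these indices'' encodes, so relation (ii) should be read as holding for each triple in its induced cyclic order.

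The main obstacle, I expect, is this last verification. The adjacent braid relation $w_iw_{i+1}w_i=w_{i+1}w_iw_{i+1}$ involves the three consecutive generators $s_{i-1},s_i,s_{i+1}$ and should reduce precisely to an instance of the cycle relation (ii), while the far-commutation $w_iw_j=w_jw_i$ for $|i-j|\ge 2$ involves up to four generators $s_{i-1},s_i,s_{j-1},s_j$ — none of which commute in $B_{n+1}$ — and will require a more delicate, likely inductive, manipulation combining the braid relations (i) with repeated use of (ii) to move generators past one another. Keeping track of the cyclic order and checking that no further independent relation is forced is the delicate bookkeeping at the heart of the argument; once both families of Artin relations are derived from (i) and (ii), the mutually inverse homomorphisms yield $G\cong B_{n+1}$, establishing completeness.
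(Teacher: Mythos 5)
Your treatment of the first assertion is essentially the paper's: both arguments bottom out in Birman--Hilden, and your star of arcs $\alpha_1,\dots,\alpha_n$ in the branched disc is exactly the shadow of the paper's identification, in which the bouquet twists become the band generators $\sigma_1\cdots\sigma_{i-1}\sigma_i\sigma_{i-1}^{-1}\cdots\sigma_1^{-1}$ of $B_{n+1}$ (the paper gets there by converting the bouquet into the chain $c_1, T_{c_1}^{-1}(c_2),\ldots,T_{c_{n-1}}^{-1}(c_n)$ inside $\Sigma$ and citing Birman--Hilden for $\pi_1$-injective chains). Your injectivity claim for $\mathrm{MCG}(N)\to \mathrm{MCG}(\Sigma)$ is asserted rather than proved, but the paper leans on the cited chain result at the corresponding point, so this is not where the problem lies.

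The genuine gap is in the second assertion: you set up the correct pair of maps $G\leftrightarrow B_{n+1}$ and then stop exactly where the proof must begin. Verifying that the words $w_1=s_1$, $w_i=s_{i-1}^{\mp 1}s_i s_{i-1}^{\pm 1}$ satisfy Artin's relations \emph{as consequences of} (i) and (ii) is not residual bookkeeping --- it is the entire content of completeness, and the paper's Section~2 consists precisely of these computations. For a cyclically ordered triple $a,b,c$, writing $x=a$, $y=T_a^{-1}(b)$, $z=T_b^{-1}(c)$, the paper derives $T_xT_yT_x=T_yT_xT_y$, $T_yT_zT_y=T_zT_yT_z$ and $T_xT_z=T_zT_x$ from (i) and (ii) by explicit word manipulation, e.g.
\begin{equation*}
T_xT_z=ab^{-1}cb=acbc^{-1}=b^{-1}(bacb)c^{-1}=b^{-1}(cbac)c^{-1}=b^{-1}cba=T_zT_x,
\end{equation*}
using one braid relation and one cycle relation (note that the adjacent Artin relation does \emph{not} reduce to a single instance of (ii), as you guess; it needs braid and cycle relations combined). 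Moreover, your worry that the far commutation $w_iw_j=w_jw_i$ requires an inductive manipulation of four generators dissolves once the relations are organized triple by triple, which is the one small idea missing from your outline: the triple $(c_{i-1},c_{j-1},c_j)$ yields that $T_{i-1}$ commutes with $T_{j-1}^{-1}T_jT_{j-1}$, the triple $(c_i,c_{j-1},c_j)$ yields that $T_i$ commutes with $T_{j-1}^{-1}T_jT_{j-1}$, and these two facts immediately imply that $w_i=T_{i-1}^{-1}T_iT_{i-1}$ commutes with $w_j=T_{j-1}^{-1}T_jT_{j-1}$. So everything reduces to three-generator identities checked for each cyclically ordered triple; until those identities (or an equivalent derivation) are actually carried out, your argument establishes a surjection $G\to B_{n+1}$ but not the injectivity that constitutes the ``complete set of relations'' claim.
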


A subset of $\Sigma$ is \emph{$\pi_1$-injective} if the canonical inclusion induces an injection of its fundamental group into $\pi_1(\Sigma)$. 
Dropping the assumption of $\pi_1$-injectivity, one would still 
get a quotient of
$B_{n+1}$ rather than $B_{n+1}$ itself.
We derive this proposition in the next section, since it is hard to extract from the existing literature. In the third section, we show that the cycle relation together with the braid relation characterises bouquets of $3$ curves. The generalisation from $3$ to $n$~curves is then purely topological, as we will see in Section~\ref{sec:genbouquets}. 

{After a first version of this article appeared as a preprint, it was pointed out to us that the generalisation from $3$ to $n$~curves was previously proven by Aougab and Gaster; see Proposition 5.3 in~\cite{AougabGaster_17}. In other words, once established for the case $n=3$ as is done in Section~\ref{sec:tripbouq}, the if statement of Theorem~\ref{bouquet} follows from their work. To be self-contained, we keep our arguments from Section~\ref{sec:genbouquets}. We hope that our complementary treatment will steer the reader to Aougab and Gaster's text, which has much to offer beyond Proposition~5.3.}

\subsection*{Acknowledgements} SB and PF gratefully acknowledge support by the SNSF Grant 178756 and
the SNSF Grant 181199, respectively.

\section{Bouquets and braid groups} \label{sec:braidgroups}

We denote by $T_a
$ the positive Dehn twist along a simple closed curve $a$ in an oriented surface $\Sigma$. Given two simple closed curves $a,b \subset \Sigma$ that intersect transversally in one point, we obtain the following equality between curves, up to isotopy: $T_a(b)=T_b^{-1}(a)$. Rewriting this as $T_b T_a(b)=a$ and applying the change of coordinates $T_{T_b T_a(b)}=(T_a T_b)T_a(T_a T_b)^{-1}$, we obtain that $T_a$ and $T_b$ satisfy the braid relation
\[T_a T_b T_a=T_b T_a T_b.\]
For a more detailed proof, including the reverse implication; see Chapter~3 in~\cite{FM}.
More generally, let $a_1,a_2,\ldots,a_n \subset \Sigma$ be a of set of simple closed curves such that their union is $\pi_1$-injective in $\Sigma$ and such that they are pairwise disjoint, except for pairs with consecutive indices, which intersect transversally in one point. Such a family of curves is called a chain.

For every oriented compact surface $\Sigma$, the subgroup of the mapping class group of  $\Sigma$
generated by the Dehn twists associated with a chain of $n$~curves is isomorphic to the braid group~$B_{n+1}$. This is a consequence of work by Birman and Hilden in~\cite{BH} (see also Chapter~9 in~\cite{FM}). An interpretation of that subgroup as the monodromy group of a plane curve singularity of type~$A_n$ was later described in~\cite{PV}; the case of curves intersecting in a general tree-like pattern was solved by Wajnryb in~\cite{W}. The $\pi_1$-injectivity is needed to rule out `false chains', such as $a,b,\bar{a}$, where the curves~$a$ and~$\bar{a}$ cobound an embedded annulus. In that case, the resulting subgroup is isomorphic to the braid group $B_3$ or its quotient $\textrm{SL}(2,\Z)$ rather than~$B_4$.

Here is an important relation between bouquets and chains of curves: suppose that the simple closed curves  $a_1,a_2,\ldots,a_n \subset \Sigma$ form a $\pi_1$-injective bouquet, numbered in the anticlockwise direction around the common intersection point. Then the set of transformed curves
$$a_1,T_{a_1}^{-1}(a_2),T_{a_2}^{-1}(a_3)\ldots,T_{a_{n-1}}^{-1}(a_n)$$
forms a chain, as shown in Figure~1 for $n=4$ (where the new curves are labeled 1',2',3',4'). Moreover, the Dehn twists along these new curves generate the same subgroup in the mapping class group of $\Sigma$
as the Dehn twists associated with the curves of the initial bouquet. This is another consequence of the equation
$$T_{T_x^{-1}(y)}=T_x^{-1} T_y T_x.$$
Using the result about chains from the last paragraph, we conclude that the Dehn twists associated with the curves of a bouquet generate a subgroup isomorphic to the braid group $B_{n+1}$.

\begin{figure}[htb]
\begin{center}
\raisebox{-0mm}{\includegraphics[scale=1.0]{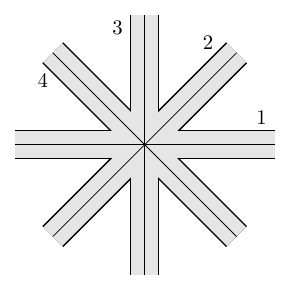}}
\qquad
\qquad
\raisebox{-0mm}{\includegraphics[scale=1.0]{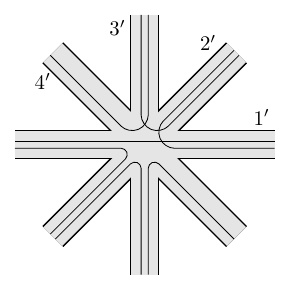}}
\caption{Bouquet and chain}\label{fig:bouquetandchain}
\end{center}
\end{figure}

As for the second statement of Proposition~\ref{braidgroup}, we need to analyse how the braid and cycle relations among the Dehn twists along the curves $a_1,a_2,\ldots,a_n$ translate into the usual braid and commutation relation among the Dehn twists associated with the transformed curves $a_1,T_{a_1}^{-1}(a_2),T_{a_2}^{-1}(a_3)\ldots,T_{a_{n-1}}^{-1}(a_n)$.

Let $a,b,c \in \{a_1,a_2,\ldots,a_n\}$ be a triple of curves ordered in the anticlockwise way, and let $x=a,y=T_a^{-1}(b),z=T_b^{-1}(c)$ be the transformed curves. The Dehn twists $T_x,T_y,T_z$ satisfy the two braid relations
$$T_x T_y T_x=T_y T_x T_y \ , \ T_y T_z T_y=T_z T_y T_z$$
and the commutation relation
$$T_x T_z=T_z T_x.$$
Moreover, this is a complete set of relations, again by the result about chains from above. 
We need to show that these are equivalent to the three braid relations
$$T_a T_b T_a=T_b T_a T_b \ , \ T_b T_c T_b=T_c T_b T_c \ , \ T_c T_a T_c=T_a T_c T_a$$
and the following version of the cycle relation, due to our choice of numbering:
$$T_b T_a T_c T_b=T_c T_b T_a T_c,$$
Deriving these relations from the braid relations among $T_x,T_y,T_z$ is an easy task, using the expressions
\begin{equation*}
\begin{split}
T_a &= T_x \\
T_b &= T_a T_y T_a^{-1}=T_x T_y T_x^{-1} \\
T_c &= T_b T_z T_b^{-1}=T_x T_y T_x^{-1} T_z T_x T_y^{-1} T_x^{-1}=T_x T_y T_z T_y^{-1} T_x^{-1}.
\end{split}
\end{equation*}

Indeed, after an identification of $T_x,T_y,T_z$ with the standard braid generators $\sigma_1,\sigma_2,\sigma_3 \in B_4$, the four relations among $T_a,T_b,T_c$ admit a pictorial proof:
$$T_a T_b T_a=\sigma_1^2 \sigma_2=T_b T_a T_b,$$
$$T_b T_c T_b=\sigma_1 \sigma_2^2 \sigma_3 \sigma_1^{-1}=T_c T_b T_c,$$
$$T_a T_c T_a=
{
\sigma_1^2 \sigma_2 \sigma_3 \sigma_2^{-1}
}
=T_c T_a T_c,$$
$$T_b T_a T_c T_b=\sigma_1^2 \sigma_2 \sigma_3=T_c T_b T_a T_c.$$


For the reverse direction, we express
\begin{equation*}
\begin{split}
T_x & = T_a \\
T_y &= T_a^{-1} T_b T_a \\
T_z &= T_b^{-1} T_c T_b, \\
\end{split}
\end{equation*}

\noindent
use the shortcuts $a=T_a,b=T_b,c=T_c$ in order to save space, and derive:
$$T_y T_x T_y=a^{-1}baaa^{-1}ba=baa=aa^{-1}baa=T_x T_y T_x.$$
Here we used the braid relation $bab=aba$. The second braid relation is a bit trickier:
\begin{equation*}
\begin{split}
T_z T_y T_z&=b^{-1}cba^{-1}bab^{-1}cb=cbc^{-1}a^{-1}bacbc^{-1}=cbc^{-1}a^{-1}cba,\\
T_y T_z T_y&=a^{-1}bab^{-1}cba^{-1}ba=a^{-1}bacbc^{-1}a^{-1}ba=a^{-1}cbba.
\end{split}
\end{equation*}
Here we used a version of the braid relation, $b^{-1}cb=cbc^{-1}$, as well as the cycle relation $bacb=cbac$. The equality $T_z T_y T_z=T_y T_z T_y$ is thus equivalent to
$$acbc^{-1}a^{-1}c=cb.$$
Thanks to the cycle relation $bacb=cbac$, the left hand side is equal to
$$b^{-1}cbacc^{-1}a^{-1}c=b^{-1}cbc=cb.$$
Finally, here is the commutation relation:
$$T_x T_z=ab^{-1}cb=acbc^{-1}=b^{-1}bacbc^{-1}=b^{-1}cbacc^{-1}=b^{-1}cba=T_z T_x.$$

A similar derivation of the equivalence of these two group presentations can be found in Section~2 of~\cite{BL}, where the cycle relation is used to define an invariant of positive braids. Applying the above procedure to all triples of curves among $a_1,a_2,\ldots,a_n$, we obtain a complete set of relations, as stated in Proposition~\ref{braidgroup}.

\section{Triple bouquets} \label{sec:tripbouq}

In this section we prove that whenever three simple closed curves $a,b,c$ in an oriented closed surface $\Sigma$ satisfy pairwise braid relations and a cycle relation, then the set of curves $a,b,c$ forms a bouquet or are all isotopic. Note that this settles Theorem~\ref{bouquet} for the case $n = 3$, since the converse follows from previous considerations. More concretely, in Section~\ref{sec:braidgroups} it was shown that the cycle relation follows algebraically from $T_xT_z = T_z T_x$, where $x = a$ and $z = T_b^{-1}(c)$.

\begin{figure}[htb]
  \centering
  \begin{minipage}{0.5\textwidth}
    \centering
    \includegraphics[scale=1.0]{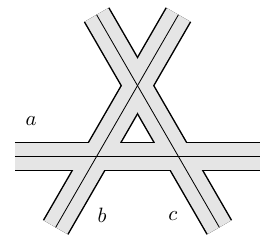}
  \end{minipage}%
  \begin{minipage}{0.5\textwidth}
    \centering
    \includegraphics[scale=1.0]{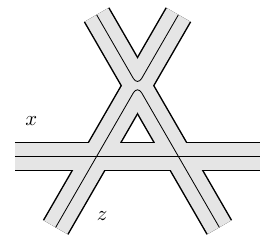}
  \end{minipage}

  \caption{The curves $x = a$ and $z = T_b^{-1}(c)$ intersect twice}
  \label{twist1}
\end{figure}

Suppose $a,b,c$ are curves satisfying the three braid relations
\begin{align*}
T_aT_bT_a &= T_bT_aT_b \\ T_aT_cT_a & =T_cT_aT_c \\ T_bT_cT_b& =T_cT_bT_c
\end{align*}
and the cycle relation  $T_b T_a T_c T_b = T_c T_b T_a T_c$. Using the relations, one checks that if two curves are isotopic, then $T_a=T_b=T_c$, so all 3 curves are isotopic. Thus, from here on, we consider $a$, $b$, and $c$ to be pairwise non-isotopic.
In particular, by the braid relations, $a,b,c$ have pairwise intersection number one. Hence, after an isotopy, they admit a tubular neighbourhood either as shown to the left of Figure~\ref{twist1}, in which case we write $a<b<c<a$, or mirrored, in which case we write $a < c < b < a$; compare Remark~\ref{rem:cyclordertriple} below.
Letting $x=a, z = T_b^{-1}(c)$ we have that $T_x$ and $T_z$ commute, under the exact same reasoning as in Section~\ref{sec:braidgroups}, where $T_xT_z = T_zT_x$ is deduced purely algebraically from the braid relations and the cycle relation.
This means that $x$ and $z$ in Figure~\ref{twist1} have disjoint representatives in their isotopy classes. Hence, $x$ and $z$ bound a bigon $B$, since their number of intersections is not minimal.

{
Notice that this allows us to exclude the case $a < c < b <a$, since
in this case, the curves $x$ and $z = T_b^{-1}(c)$
have intersection number two and hence $T_x$ and $T_z$ do not commute.
This is because the union $x \cup z$ bounds two regions,
both of whose boundaries are polygonal of length four
that intersect themselves in two corners,
and in particular the two regions are not bigons.
}

We can now assume that $a < b < c < a$. There are two possibilities for the position of $B$, indicated by the two dotted regions in Figure~\ref{bigons}. In the first case, on the left, it is obvious that $a,b,c$ form a bouquet. The second case, on the right, seems slightly more challenging. However, note that the two surfaces that are obtained by filling in the dotted regions are actually diffeomorphic via an orientation preserving diffeomorphism preserving all three curves $a,b,c$ individually as sets. One example of such a diffeomorphism is as follows. Cut up Figure~\ref{bigons} along the dashed lines,
as well as along the edges on the drawn boundary that are identified
with their opposites,
to obtain three $X$-shaped regions after six cuts. 
Rotating each of those 
regions
by 180 degrees preserves all identifications and maps the edges of the dotted triangle on the left to the edges of the dotted triangle on the right. Extending this to the dotted regions yields the desired diffeomorphism.

\begin{figure}[htb]
  \centering
  \begin{minipage}{0.5\textwidth}
    \centering
    \includegraphics[scale=1.0]{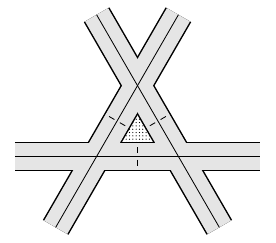}
  \end{minipage}%
  \begin{minipage}{0.5\textwidth}
    \centering
    \includegraphics[scale=1.0]{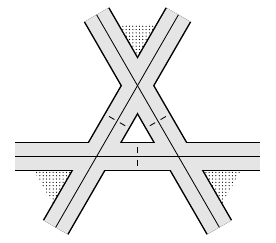}
  \end{minipage}

  \caption{Possible bigons}
  \label{bigons}
\end{figure}

It turns out that there are no further cases: the set $x \cup z$ 
bounds four regions in a small neighborhood of $x \cup z$, 
two of which we have now considered.
The other two cannot possibly be the boundary of a bigon,
since they have homotopically nontrivial boundary,
evidenced by the fact that the
boundary curves
intersect $b$ precisely once.

\section{General bouquets}\label{sec:genbouquets}
In this section, we prove Theorem~\ref{bouquet} by induction on the number of curves $n$. 
It is beneficial to be careful about the cyclic order of curves.
For a bouquet given as the union of $n$ simple closed curves $c_1,c_2,\ldots,c_n$ in an oriented surface $\Sigma$, we write
\[c_1<c_2<\cdots <c_n<c_1\] if $c_{i+1}$ occurs next (counter-clockwise) to $c_{i}$ for all $i\in\Z/n\Z$. For an example with $n=4$ and $c_1<c_2<c_3<c_4<c_1$; see the left-hand side of Figure~\ref{fig:bouquetandchain}. If a set of simple closed curves $c_1,c_2,\ldots,c_n$ forms a bouquet, we write $c_1<c_2<\cdots <c_n<c_1$ if the corresponding bouquet is as above. In other words, by the definition from the introduction, $c_1<c_2<\cdots <c_n<c_1$ means that $c_1,c_2,\ldots,c_n$ form an oriented bouquet.

\begin{remark}\label{rem:cyclordertriple} For a bouquet given as the union of three simple closed curves $a$, $b$, $c$ in $\Sigma$, we have $a<b<c<a$ if and only if isotoping $a$, $b$, $c$ into generic position (i.e., three distinct transversal intersection points realizing the pairwise intersection number one, respectively) yields that a regular neighborhood of $a\cup b\cup c$ is orientation-preservingly diffeomorphic to the one depicted on the left-hand-side of Figure~\ref{twist1}.

More generally, let $a,b,c$ be simple closed curves in $\Sigma$ that have pairwise intersection number one. Having 
$a<b<c<a$ and 
$a<c<b<a$, respectively, can be defined as in Section~\ref{sec:tripbouq}. And for bouquets of 3 curves the notions agree.
\end{remark}
Analyzing the case of 3 curves (as in Section~\ref{sec:tripbouq}) while keeping track of the cyclic order leads to the following proposition, which we use to prove Theorem~\ref{bouquet}. 
\begin{proposition}\label{prop:orderversionofmainresult} Fix $n\geq 2$.
Let $c_1,c_2,\ldots,c_n, c_{n+1}$ be simple closed curves in an oriented compact surface $\Sigma$ such that the set of $n$ curves $c_1,c_2,\ldots,c_n$ forms an oriented bouquet. Denote the positive Dehn twists along $c_{i}$ by~$T_i$.
If 
the $T_i$ 
satisfy
\begin{enumerate}
\item [(i')] the braid relation $T_i T_{n+1} T_i=T_{n+1} T_i T_{n+1}$ for all $1\leq i \leq n$ and
\item [(ii')] the cycle relation $T_{n} T_1 T_{n+1} T_{n} = T_{n+1} T_{n} T_1 T_{n+1}$ or one of its cyclic permutations,
\end{enumerate}
then the set of $n+1$ curves $c_1,c_2,\ldots,c_n, c_{n+1}$ forms an oriented bouquet.
\end{proposition}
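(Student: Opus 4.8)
The plan is to induct on $n$. The base case $n=2$ is exactly the triple analysis of Section~\ref{sec:tripbouq}: the three curves $c_1,c_2,c_3$ satisfy all three pairwise braid relations---the relation $T_1T_2T_1=T_2T_1T_2$ holds because $c_1,c_2$ already form a bouquet, and the remaining two hold by hypothesis~(i')---together with the cycle relation~(ii'). Since $c_1,c_2$ are non-isotopic, Section~\ref{sec:tripbouq} shows they form a bouquet, and the dictionary established at the end of that section reads off the cyclic order $c_1<c_2<c_3<c_1$ from the precise shape of~(ii').

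For the inductive step, with $n\geq 3$, I would delete the curve $c_2$ and apply the inductive hypothesis to the $n-1$ curves $c_1,c_3,c_4,\ldots,c_n$, which still form a bouquet with the induced cyclic order $c_1<c_3<\cdots<c_n<c_1$. Deleting $c_2$ rather than any other curve is the crucial choice: the cycle relation demanded by the inductive hypothesis is precisely the wrap-around relation $T_nT_1T_{n+1}T_n=T_{n+1}T_nT_1T_{n+1}$ for the triple $(c_n,c_1,c_{n+1})$, which is exactly~(ii'), and the braid relations it requires are among those in~(i'). The inductive hypothesis then produces a second bouquet, on $c_1,c_3,\ldots,c_n,c_{n+1}$, with cyclic order $c_1<c_3<\cdots<c_n<c_{n+1}<c_1$.

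At this point I have two bouquets sharing the sub-bouquet $S=c_1\cup c_3\cup\cdots\cup c_n$: the given $B_1=c_1\cup\cdots\cup c_n$ and the new $B_2=c_1\cup c_3\cup\cdots\cup c_n\cup c_{n+1}$. Crucially, the two extra curves lie in different gaps of the cyclic order of $S$, namely $c_2$ between $c_1$ and $c_3$ and $c_{n+1}$ between $c_n$ and $c_1$. The remaining, purely topological, task is to merge $B_1$ and $B_2$ into a single bouquet on all $n+1$ curves realising the cyclic order $c_1<c_2<\cdots<c_n<c_{n+1}<c_1$. My approach is to cut $\Sigma$ along $S$: the assertion that $B_2$ is a bouquet with $c_{n+1}$ in its prescribed gap becomes the assertion that, in the cut surface, $c_{n+1}$ is isotopic rel boundary to a single arc joining the two boundary points flanking that gap at the basepoint. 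Since the cut surface together with its boundary labelling is determined by $S$ and its cyclic order alone, the same arc description is available inside the $B_1$-realisation, so $c_{n+1}$ can be isotoped, with $S$ held fixed, through the common point of $B_1$ in the gap after $c_n$.

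The hard part is exactly this merging step. One must make precise---via a change-of-coordinates comparison of the two cut surfaces---that the realisation of the shared sub-bouquet $S$ is rigid enough that a curve completing $S$ to a larger bouquet can be absorbed into a fixed copy of $S$, and that carrying this out for $c_{n+1}$ does not move $c_2$. Here the disjointness of the two gaps is what keeps $c_2$ and $c_{n+1}$ apart near the basepoint, while away from it the two arcs meet exactly once, consistently with the braid relation between them. Once the merge is justified, all $n+1$ curves pass through a common point in the asserted cyclic order, which closes the induction.
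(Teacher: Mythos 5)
Your inductive set-up is sound: deleting $c_2$ is the right choice, since the sub-collection $c_1,c_3,\ldots,c_n$ is still a bouquet with the induced cyclic order, the braid relations it needs are among (i'), and the wrap-around cycle relation it needs is literally (ii'); the base case $n=2$ does follow from Section~\ref{sec:tripbouq}. But the proof does not close, because the ``merging step'' you defer is not a technicality --- it is the entire content of the proposition, and the justification you sketch for it is flawed. You have two bouquets $B_1$ and $B_2$ that share the curves of $S=c_1\cup c_3\cup\cdots\cup c_n$ only as \emph{isotopy classes}: the inductive hypothesis realises $B_2$ after isotoping the curves of $S$ to some new position $S'$, not in the position they occupy inside the fixed realisation of $B_1$. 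Your appeal to the fact that ``the cut surface together with its boundary labelling is determined by $S$ and its cyclic order alone'' only yields an abstract diffeomorphism between the two cut surfaces (a change-of-coordinates statement); it does not yield an ambient isotopy of $\Sigma$ carrying $S'$ to $S$ curve-by-curve, and without that one cannot transport the arc description of $c_{n+1}$ from the $B_2$-picture into the $B_1$-picture. (Note also that isotopies of $c_{n+1}$ in $\Sigma$ do not correspond to isotopies rel boundary in the cut surface, since $c_{n+1}$ may cross $S$ during an isotopy, so the arc class is not even well defined from the data you fix.) What is missing is a rigidity lemma of the form: two embedded bouquets with individually isotopic curves and the same cyclic order are ambient isotopic. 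That statement is plausible, but proving it requires exactly the kind of bigon/region analysis the proposition itself demands; as written, the induction hides the main difficulty inside an unproven lemma. A second, smaller gap: even granting the merge, $c_2$ and $c_{n+1}$ each meet $S$ only at the common point, but nothing you prove prevents them from intersecting \emph{each other} at additional points, which would violate the bouquet condition for all $n+1$ curves; the sentence ``away from it the two arcs meet exactly once, consistently with the braid relation'' is an assertion about realised curves, whereas the braid relation only controls the minimal intersection number.

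For comparison, the paper avoids any comparison of two realisations. It fixes one realisation in which $c_1,\ldots,c_n$ pass through $p$ and $c_{n+1}$ is in minimal, generic position, applies the Section~\ref{sec:tripbouq} analysis to the single triple $(c_1,c_n,c_{n+1})$ to produce an embedded triangle region $C$ with one vertex at $p$ and one side on $c_{n+1}$, and then observes that the cyclic-order hypothesis forces every intermediate curve $c_i$ to cross $C$ in a single arc from $p$ to the $c_{n+1}$-side (here the braid relations (i') are used to cap the intersections $|c_i\cap c_{n+1}|$ at one). A single isotopy of $c_{n+1}$ across $C$ then sweeps past all these arcs at once and completes the bouquet, with no induction and no need to compare different realisations of a shared sub-bouquet. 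If you want to salvage your approach, you would need to prove the rigidity lemma above, and at that point you will find yourself reproving the paper's triangle argument.
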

As an aside, we note that $c_{n+1}$ being distinct from~$c_i$, up to isotopy, for $i\leq n$ is implied without being assumed.

\begin{proof}[Proof of Proposition~\ref{prop:orderversionofmainresult}] As a consequence of the bigon criterion, we can and do isotope all the $c_i$ to achieve that they intersect pairwise transversely and the following holds. The $c_1,c_2,\ldots,c_n$ intersect in the same point $p$
, and $c_i$ and $c_{n+1}$ realize their intersection number and are in general position (their intersections are pairwise different and different from $p$) for all $i\leq n$; see Figure~\ref{fig:n+1}~(A).
\begin{figure}[htb]
  \centering
  \begin{minipage}{0.35\textwidth}
       \centering
       \includegraphics{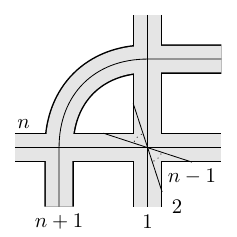}\\
       (A)
  \end{minipage}%
  \begin{minipage}{0.35\textwidth}
       \centering
       \includegraphics{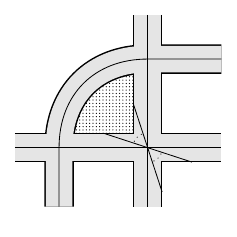}\\
       (B)
  \end{minipage}%
  \begin{minipage}{0.3\textwidth}
       \centering
       \includegraphics{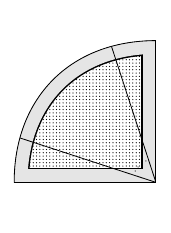}\\
       (C)
  \end{minipage}

  \caption{(A): A neighborhood of $c_1\cup c_n\cup c_{n+1}$ (grey), for $2\leq i\leq n-1$ the intersections between $c_i$ and $c_{n+1}$ are not drawn.
  \newline(B): That neighborhood union the triangle $\Delta$ (dotted).
  \newline(C): The region $C$ and its intersection with the $c_i$.}
  \label{fig:n+1}
  \end{figure}
We note that, due to (ii'), the curves $c_1$, $c_n$, and $c_{n+1}$ do intersect as depicted in Figure~\ref{fig:n+1} (A), rather than with the opposite cyclic order; see analysis of the cyclic order at the end of Section~\ref{sec:tripbouq}. We also note that $c_i$ and $c_{n+1}$ intersect at most once since they satisfy the braid relation~(i').

By the argument in Section~\ref{sec:tripbouq}, (i') and (ii') imply that the triple of curves $a=c_1$, $b=c_{n}$, and $c=c_{n+1}$ forms a bouquet. More precisely, up to an orientation preserving diffeomorphism, we have that a regular neighborhood of $a\cup b\cup c$ union a triangle $\Delta$ is embedded in $\Sigma$ as depicted in Figure~\ref{fig:n+1}~(B).

Denote by $C$ the connected component of $\Sigma\setminus (a \cup b \cup c)$ containing~$\Delta$. By the assumption on $c_1,c_2,\ldots,c_n$ forming an oriented bouquet, the triangle $C$ has nonempty intersections with all $c_i$ for $1\leq i\leq n$.
Hence, each $c_i$ intersects $C$ in an interval with its endpoints on $\partial C$: one at $p$ and the other one in the interior of the interval $c_{n+1}\cap \partial C$; see Figure~\ref{fig:n+1}~(C). Thus, after isotoping $c_{n+1}$ across $C$, we conclude that
$c_1,c_2,\ldots,c_n, c_{n+1}$ form an oriented bouquet. 
\end{proof}

\begin{proof}[Proof of Theorem~\ref{bouquet}]
For the case of $n=2$, recall from the first paragraph of the introduction that two non-isotopic simple closed curves can be isotoped to intersect once transversally if and only if the corresponding Dehn-twists satisfy the braid relation. For $n\geq 3$, we induct on $n$.
The base case (3 curves) was treated in Section~\ref{sec:tripbouq}. For the induction step, we assume as the induction hyphothesis that Theorem~\ref{bouquet} holds for a fixed $n\geq3$.

The only if statement follows from Proposition~\ref{braidgroup}.
For the if statement, consider $c_1,c_2,c_n,c_{n+1}$ in $\Sigma$ with corresponding positive Dehn twists $T_i$ along them satisfying~(i) and (ii) and not all $T_i$ are equal. By cyclically relabelling the curves if needed, we may and do assume that not all $T_1, T_2,\cdots,T_n$ are equal.
By the induction hypothesis, $c_1,c_2,\ldots,c_n$ form an oriented bouquet, i.e.~$c_1<c_2\cdots<c_n<c_1$.

We consider a 
{pair
of consecutive curves $b,a$}
in this bouquet; that means, $a=c_{i+1}$ and $b=c_{i}$ for $1\leq i \leq n-1$ or $a=c_1$ and $b=c_n$.
There is at least one choice of 
{$b,a$}
such that the cyclic order of $a,b,c_{n+1}$ (as defined in Remark~\ref{rem:cyclordertriple}) is $a<b<c_{n+1}<a$.
Indeed, assume we have $c_{i+1}<c_{n+1}<c_i<c_{i+1}$ for all $1\leq i\leq n-1$,
then one checks (using $c_1<c_2\ldots<c_n$) that $c_1<c_{n}<c_{n+1}<c_1$.

To conclude, we cyclically relabel  $c_1,c_2,\ldots,c_n$ such that $c_1<c_{n}<c_{n+1}<c_1$. Hence, by Remark~\ref{rem:cyclordertriple} the cycle relation for $c_1$, $c_n$, and $c_{n+1}$ provided by (ii) is \[T_{n} T_1 T_{n+1} T_{n} = T_{n+1} T_{n} T_1 T_{n+1}\] or one of its cyclic permutations. Thus, $c_1,c_2,\ldots,c_n, c_{n+1}$ form an oriented bouquet by Proposition~\ref{prop:orderversionofmainresult}. This concludes the induction step.
\end{proof}


\section{An explicit criterion}
From the proof of Theorem~\ref{bouquet}, one notices that we did not use all cycle relations as provided by the assumption (ii). Only linearly many cycle relations (in terms of number of curves) are needed.
Indeed, inductive application of Proposition~\ref{prop:orderversionofmainresult} yields the following. 
\begin{corollary}
Fix $n\geq 3$. Let $c_1,c_2,\ldots,c_n$ be simple closed curves in an oriented compact surface $\Sigma$  at least two of which are non-isotopic.
Denote the positive Dehn twists along $c_{i}$ by $T_i$.
Then, the set of $n$ curves $c_1,c_2,\ldots,c_n$ forms an oriented bouquet
if and only if 
the $T_i$ satisfy
\begin{enumerate}
\item [(i'')] the braid relation $T_i T_j T_i=T_{j} T_i T_{j}$ for all $1\leq i<j\leq n$ and
\item [(ii'')] the cycle relation $T_{i} T_1 T_{i+1} T_{i} = T_{i+1} T_{i} T_1 T_{i+1}$ or one of its cyclic permutations for all $2\leq i\leq n-1$.\qed
\end{enumerate}
\end{corollary}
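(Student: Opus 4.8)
The plan is to prove both implications, handling the backward direction (the relations imply a bouquet with the stated cyclic order) by induction on the number of curves via repeated application of Proposition~\ref{prop:orderversionofmainresult}, and the forward direction (a bouquet with the stated cyclic order implies the relations) by restricting to suitable triples and quoting the analysis of Section~\ref{sec:tripbouq}. For the backward direction I would start with the base case $k=3$. The triple $c_1,c_2,c_3$ satisfies the three braid relations coming from (i'') and the cycle relation (ii'') at $i=2$, so by Section~\ref{sec:tripbouq} these curves are either all isotopic or form a bouquet. Running the dichotomy ``two isotopic forces all three isotopic'' along the chain of triples $(c_1,c_i,c_{i+1})$ and using the hypothesis that at least two of the $c_i$ are non-isotopic, one excludes the all-isotopic alternative, so $c_1,c_2,c_3$ form a bouquet; the particular relation (ii'') at $i=2$ then pins down, via the characterisation at the end of Section~\ref{sec:tripbouq}, the cyclic order $c_1<c_2<c_3<c_1$.

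For the inductive step I would assume $c_1,\ldots,c_k$ already form a bouquet with cyclic order $c_1<\cdots<c_k<c_1$ for some $3\leq k\leq n-1$, and apply Proposition~\ref{prop:orderversionofmainresult} with the role of its ``$n$'' played by $k$. Its hypothesis (i') is the family of braid relations $T_iT_{k+1}T_i=T_{k+1}T_iT_{k+1}$ for $i\leq k$, all of which are instances of (i''), while its hypothesis (ii') is the cycle relation $T_kT_1T_{k+1}T_k=T_{k+1}T_kT_1T_{k+1}$, which is exactly (ii'') at $i=k$ (admissible since $2\leq k\leq n-1$). The proposition then yields a bouquet $c_1,\ldots,c_{k+1}$ with cyclic order $c_1<\cdots<c_{k+1}<c_1$, and the newly added curve is automatically non-isotopic to the earlier ones by the remark following Proposition~\ref{prop:orderversionofmainresult}. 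Iterating from $k=3$ up to $k=n-1$ produces the full bouquet with the asserted cyclic order.

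For the forward direction, suppose $c_1,\ldots,c_n$ form a bouquet with cyclic order $c_1<\cdots<c_n<c_1$. Then each pair of curves has intersection number one, so the braid relations (i'') hold (equivalently, by Proposition~\ref{braidgroup}). For each $2\leq i\leq n-1$ I would restrict to the sub-bouquet $c_1,c_i,c_{i+1}$; since $1<i<i+1$, the tangent lines at the common point inherit the cyclic order $c_1<c_i<c_{i+1}<c_1$. By Remark~\ref{rem:cyclordertriple} together with the characterisation at the end of Section~\ref{sec:tripbouq}, applied with $a=c_1$, $b=c_i$, $c=c_{i+1}$, this cyclic order is equivalent to the relation $T_iT_1T_{i+1}T_i=T_{i+1}T_iT_1T_{i+1}$, which is precisely (ii'').

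The main obstacle I anticipate is bookkeeping rather than new geometry: one must verify that the inductive scheme consumes exactly the cycle relations listed in (ii''), with no index falling outside the range $2\leq i\leq n-1$, and that ``adding the last curve'' at each step genuinely uses the triple $(c_1,c_k,c_{k+1})$, so that the single relation (ii'') at $i=k$ suffices. This is exactly the point that makes only linearly many cycle relations necessary. A secondary point requiring care is the non-isotopy argument in the base case, where the two-non-isotopic hypothesis must be combined with the Section~\ref{sec:tripbouq} dichotomy to rule out the all-isotopic configuration before the induction can begin.
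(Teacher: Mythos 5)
Your proposal is correct and takes essentially the same route as the paper: the paper's (one-line) proof is precisely ``inductive application of Proposition~\ref{prop:orderversionofmainresult}'', with the base case and the forward direction supplied by the triple analysis and cyclic-order characterisation of Section~\ref{sec:tripbouq}, exactly as you spell out. Your additional bookkeeping (excluding the all-isotopic case via the chain of triples $(c_1,c_i,c_{i+1})$, and checking that only the cycle relations with $2\leq i\leq n-1$ are consumed) is a faithful elaboration of what the paper leaves implicit.
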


\bigskip
\noindent
Universit\"at Bern, Sidlerstrasse 5, CH-3012 Bern, Switzerland

\smallskip
\noindent
ETH Z\"urich, R\"amistrasse 101, CH-8092 Z\"urich, Switzerland

\smallskip
\noindent
Universit\"at Bern, Sidlerstrasse 5, CH-3012 Bern, Switzerland

\bigskip
\noindent
\texttt{sebastian.baader@math.unibe.ch}

\smallskip
\noindent
\texttt{peter.feller@math.ch}

\smallskip
\noindent
\texttt{levi.ryffel@math.unibe.ch}


\begin{thebibliography}{99}

\bibitem{AougabGaster_17}
T.~Aougab and J.~Gaster.
\newblock Curves intersecting exactly once and their dual cube complexes.
\newblock {\em Groups Geom. Dyn.}, 11(3):1061--1101, 2017.


\bibitem{BL}
     S.~Baader, M.~L\"onne: \emph{Secondary Braid Groups}, arXiv:2001.09098.

\bibitem{Bi}
     J.~S.~Birman: \emph{Braids, links, and mapping class groups}, Annals of Mathematics Studies, No.~82. Princeton University Press, Princeton, N.J.; University of Tokyo Press, Tokyo, 1974.

\bibitem{BH}
     J.~S.~Birman, H.~M.~Hilden: \emph{On isotopies of homeomorphisms of Riemann surfaces}, Ann. of Math.~(2) 97 (1973), 424--439.

\bibitem{De}
     M.~Dehn: \emph{Die Gruppe der Abbildungsklassen. Das arithmetische Feld auf Fl\"achen}, Acta Math.~69 (1938), no.~1, 135--206.

\bibitem{FM}
     B.~Farb, D.~Margalit: \emph{A primer on mapping class groups}, Princeton Mathematical Series, 49. Princeton University Press, Princeton, NJ, 2012.


\bibitem{GM}
    J.~Grant, R.~J.~Marsh: \emph{Braid groups and quiver mutation}, Pacific~J.~Math.~290 (2017), 77--116.


\bibitem{L}
    M.~L\"onne: \emph{Fundamental group of discriminant complements of Brieskorn-Pham polynomials}, C. R. Math. Acad. Sci. Paris~345 (2007), no.~2, 93--96.
    

\bibitem{PV}
    B.~Perron, J.~P.~Vannier: \emph{Groupe de monodromie g\'eom\'etrique des singularit\'es simples}, Math. Ann.~306 (1996), no.~2, 231--245.

\bibitem{PS}
     P.~Portilla~Cuadrado, N.~Salter: \emph{Vanishing cycles, plane curve singularities, and framed mapping class groups}, arXiv:2004.01208.


\bibitem{W}
     B.~Wajnryb: \emph{Artin groups and geometric monodromy}, Invent. Math.~138 (1999), no.~3, 563--571.

\end{thebibliography}
\end{document}